\documentclass[12pt,reqno]{amsart}
\usepackage{amssymb}
\usepackage{graphicx}
\usepackage{xcolor}

\usepackage[all]{xy}
\DeclareFontFamily{U}{mathb}{\hyphenchar\font45}
\DeclareFontShape{U}{mathb}{m}{n}{
      <5> <6> <7> <8> <9> <10> gen * mathb
      <10.95> mathb10 <12> <14.4> <17.28> <20.74> <24.88> mathb12
      }{}
\DeclareSymbolFont{mathb}{U}{mathb}{m}{n}
\DeclareMathSymbol{\righttoleftarrow}{3}{mathb}{"FD}

\oddsidemargin 15mm
\evensidemargin 15mm
\textwidth 130mm

\theoremstyle{plain}
\newtheorem{prop}{Proposition}[]
\newtheorem{theo}[prop]{Theorem}

\newtheorem{lemm}[prop]{Lemma}
\theoremstyle{remark}
\newtheorem{rema}[prop]{Remark}

\theoremstyle{definition}
\newtheorem{defi}[prop]{Definition}

\newtheorem{exam}[prop]{Example}
\numberwithin{equation}{section}

\newcommand{\bP}{{\mathbb P}}

\newcommand{\cX}{{\mathcal X}}
\newcommand{\cY}{{\mathcal Y}}
\newcommand{\cZ}{{\mathcal Z}}

\newcommand{\lra}{\longrightarrow}

\newcommand{\bZ}{{\mathbb Z}}

\newcommand{\eqto}{\stackrel{\lower1.5pt\hbox{$\scriptstyle\sim\,$}}\to}
\newcommand{\eqdashto}{\stackrel{\lower1.5pt\hbox{$\scriptstyle\sim\,$}}\dashrightarrow}
\newcommand{\actsfromleft}{\mathrel{\reflectbox{$\righttoleftarrow$}}}
\newcommand{\actsfromright}{\righttoleftarrow}

\DeclareMathOperator{\Aut}{Aut}

\DeclareMathOperator{\Var}{Var}
\DeclareMathOperator{\Burn}{Burn}
\DeclareMathOperator{\Bir}{Bir}
\DeclareMathOperator{\cBir}{\mathcal B\mathrm{ir}}
\DeclareMathOperator{\Cr}{Cr}

\DeclareMathOperator{\Ex}{Ex}

\begin{document}
\title[Invariants of birational maps]{Burnside groups and orbifold invariants of birational maps}

\author{Andrew Kresch}
\address{
  Institut f\"ur Mathematik,
  Universit\"at Z\"urich,
  Winterthurerstrasse 190,
  CH-8057 Z\"urich, Switzerland
}
\email{andrew.kresch@math.uzh.ch}
\author{Yuri Tschinkel}
\address{
  Courant Institute,
  251 Mercer Street,
  New York, NY 10012, USA
}

\email{tschinkel@cims.nyu.edu}

\address{Simons Foundation\\
160 Fifth Avenue\\
New York, NY 10010\\
USA}

\date{August 11, 2022}

\begin{abstract}
We construct new invariants of equivariant birational isomorphisms taking values in equivariant Burnside groups. 
\end{abstract}

\maketitle

\section{Introduction}
\label{sec.intro}

In this note, we generalize the new \emph{Burnside invariant} of birational isomorphisms introduced and studied in \cite{LSh}, \cite{LSZ}. 

Let 
$$
\phi\colon X\dashrightarrow Y
$$ 
be a birational isomorphism of smooth projective $n$-dimensional algebraic varieties over an algebraically closed field $k$ of characteristic zero. 
The invariant takes values in the Burnside group
\[
\Burn_{n-1}=\bZ[\Bir_{n-1}], 
\]
of \cite{KT}, the free abelian group on birational equivalence classes of algebraic varieties of dimension $n-1$ over $k$, i.e., isomorphism classes of function fields of transcendence degree $n-1$ over $k$ (we drop the dependence on $k$ from the notation). We denote by $[K]$ the class of the function field $K$. 

To define the invariant, let $\mathrm{Ex}(\phi)$ and $\mathrm{Ex}(\phi^{-1})$ be the sets of divisorial components of the exceptional locus of $\phi$, respectively $\phi^{-1}$.   
The main observation of \cite{LSh} was that 
\begin{equation}
\label{eqn:c}
c(\phi) := \sum_{E\in \Ex(\phi^{-1})}
[k(E)] 
-\sum_{D\in \Ex(\phi)} [k(D)] \in \Burn_{n-1}
\end{equation}
respects composition \cite[Lemma 2.2]{LSh}:
$$
c(\psi\circ\phi) = c(\phi)+c(\psi);
$$
here
the sums are over divisors in the exceptional loci. 
Thus there is an induced homomorphism 
\begin{equation}
\label{eqn:cc}
c\colon\Bir\Aut(X) \to \Burn_{n-1}.
\end{equation}
from the group of birational automorphisms of $X$ over $k$ to the Burnside group.
There is also a \emph{motivic} refinement $\tilde c$ of $c$, taking values in the
{\em truncated Grothendieck group} of algebraic varieties, 
$$
\tilde{c}(\phi) \in \mathrm K_0(\Var^{\le n-1}),
$$
so that the homomorphism $c$  in \eqref{eqn:cc} factors as
$$
\Bir\Aut(X) \stackrel{\tilde{c}}{\to} \mathrm K_0(\Var^{\le n-1}) \to \Burn_{n-1},
$$
where the rightmost map, the projection to classes of  $(n-1)$-dimensional varieties, is surjective. 

Using these invariants Lin and Shinder obtained new structural results about Cremona groups 
$$
\Cr_n=\Bir\Aut(\bP^n), 
$$
see, e.g., \cite[Theorem 1.2]{LSh}. The nontriviality of these invariants in applications is based on the existence of
nontrivial L-equivalences of algebraic varieties.

One can ask about equivariant birational isomorphisms of $G$-varieties, i.e., varieties  which are equipped with an action of a finite group $G$, or about birational isomorphisms between orbifolds. Building on work in \cite{BnG} and \cite{Bbar}, 
we propose a generalization of \eqref{eqn:c} to the equivariant and orbifold contexts, by incorporating information about the induced group action, respectively, orbifold structure, on components of exceptional loci. In the equivariant context, the invariants take values in 
$$
\bZ[\Bir_{G,n-1}],
$$
the free abelian group on $G$-equivariant birational equivalence classes of
varieties of dimension $n-1$ over $k$ with generically free $G$-action.
There is an associated \emph{equivariant Burnside group}
\[ \Burn_{n-1}(G) \]
defined in \cite{BnG}, with a homomorphism
$$
\bZ[\Bir_{G,n-1}]\to \Burn_{n-1}(G);
$$
computations in the latter group allow to 
distinguish classes in $\bZ[\Bir_{G,n-1}]$, see
\cite{HKTsmall} and \cite{KT-struct} for examples. 

In the orbifold context, the invariants take values in 
$$
\bZ[\cBir_{n-1}],
$$
the free abelian group on birational equivalence classes of projective orbifolds of dimension $n-1$ over $k$, which, in turn, admits a homomorphism 
\[
\bZ[\cBir_{n-1}] \to \overline{\Burn}_{n-1}
\]
to the \emph{orbifold Burnside group},
defined in \cite{Bbar}.

Concretely, let $G$ be a finite group, acting generically freely on smooth projective $n$-dimensional varieties $X$, $Y$ over $k$. Let 
$$
\phi\colon X \dashrightarrow Y
$$ 
be a $G$-equivariant birational isomorphism.  
Let $\Ex_G(\phi)$ be the set of $G$-orbits of divisorial components of the exceptional locus of $\phi$.
As an analogue of \cite[Theorem 2.8]{LSh}, we obtain:

\begin{theo}
\label{thm:main}
The assignment
\begin{equation*}
\label{eqn:cg}
C_G(\phi) := \sum_{\substack{E\in \Ex_G(\phi^{-1})\\ \mathrm{gen. stab}(E)=\{1\}}}
[E\actsfromright G] 
-\sum_{\substack{D\in \Ex_G(\phi)\\ \mathrm{gen. stab}(D)=\{1\}}} 
[D\actsfromright G] \in \bZ[\Bir_{G,n-1}],
\end{equation*}
where the sums are over divisorial orbits of components of exceptional loci with trivial generic stabilizer, 
respects compositions 
$$
C_G(\psi\circ\phi) = C_G(\phi)+C_G(\psi), 
$$
for sequences of equivariant birational isomorphisms
$$
X\stackrel{\phi}{\dashrightarrow} Y\stackrel{\psi}{\dashrightarrow} Z.
$$
\end{theo}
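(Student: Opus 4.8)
The plan is to reduce the composition law to an additivity statement for a simpler quantity attached to a single birational morphism, following the proof of \cite[Lemma~2.2]{LSh} (cf.\ also \cite[Theorem~2.8]{LSh}) in the equivariant setting. Since $\chara k=0$ and $G$ is finite, $G$-equivariant resolution of singularities is available, so I would first choose a smooth projective variety $W$ with a $G$-action together with $G$-equivariant birational morphisms
\[
p\colon W\to X,\qquad q\colon W\to Y,\qquad r\colon W\to Z
\]
with $\phi=q\circ p^{-1}$, $\psi=r\circ q^{-1}$ and $\psi\circ\phi=r\circ p^{-1}$ as rational maps: take the closure $\Gamma$ of the graph of $(\mathrm{id},\phi,\psi\circ\phi)\colon X\dashrightarrow X\times Y\times Z$ with the diagonal $G$-action, equivariantly resolve $\Gamma$, and let $p$, $q$, $r$ be the three projections.

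For a $G$-equivariant birational morphism $f\colon W\to V$ of smooth projective $G$-varieties, let $\cE^G_f$ denote the set of $G$-orbits of $f$-exceptional prime divisors of $W$, where a prime divisor is $f$-exceptional if its image has codimension $\ge 2$ (a condition depending only on the $G$-orbit, as $f$ is equivariant), and set
\[
\delta_G(f):=\sum_{\substack{F\in\cE^G_f\\ \mathrm{gen.\,stab}(F)=\{1\}}}[F\actsfromright G]\ \in\ \bZ[\Bir_{G,n-1}].
\]
The crux is the identity $C_G(\phi)=\delta_G(p)-\delta_G(q)$, together with its analogues for $\psi$ via $(q,r)$ and for $\psi\circ\phi$ via $(p,r)$; granting these, the theorem follows by telescoping:
\[
C_G(\phi)+C_G(\psi)=\bigl(\delta_G(p)-\delta_G(q)\bigr)+\bigl(\delta_G(q)-\delta_G(r)\bigr)=\delta_G(p)-\delta_G(r)=C_G(\psi\circ\phi).
\]

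To prove the identity I would use the standard dictionary, made $G$-equivariant. For a birational morphism $p\colon W\to X$ of smooth projective varieties, a prime divisor of $W$ that is not $p$-exceptional maps birationally onto a prime divisor of $X$, while every prime divisor $D\subset X$ has a unique strict transform $\widetilde D\subset W$ — the component of $p^{-1}(D)$ dominating $D$, which is therefore not $p$-exceptional — on which $p$ restricts to a $\mathrm{Stab}_G(D)$-equivariant birational map; hence $\mathrm{Stab}_G(\widetilde D)=\mathrm{Stab}_G(D)$, the generic stabilizer of $\widetilde D$ agrees with that of $D$, and $[\widetilde D\actsfromright G]=[D\actsfromright G]$. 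Applying this to $p$ and to $q$ and combining, one finds (as in \cite[Theorem~2.8]{LSh}) that $D\in\Ex_G(\phi)$ exactly when the $p$-strict transform of $D$ is $q$-exceptional, and $E\in\Ex_G(\phi^{-1})$ exactly when the $q$-strict transform of $E$ is $p$-exceptional. Thus strict transform defines bijections $\Ex_G(\phi^{-1})\leftrightarrow\cE^G_p\setminus\cE^G_q$ and $\Ex_G(\phi)\leftrightarrow\cE^G_q\setminus\cE^G_p$ that respect the condition of trivial generic stabilizer and the associated classes in $\bZ[\Bir_{G,n-1}]$. Consequently $C_G(\phi)$ is the sum of $[F\actsfromright G]$ over $F\in\cE^G_p\setminus\cE^G_q$ with trivial generic stabilizer, minus the same sum over $F\in\cE^G_q\setminus\cE^G_p$; adding and subtracting the contribution of the orbits in $\cE^G_p\cap\cE^G_q$ with trivial generic stabilizer turns this expression into $\delta_G(p)-\delta_G(q)$, as wanted.

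I expect the main work to lie in the careful verification of the equivariant dictionary: that strict transform commutes with the $G$-action and preserves stabilizers, that $p|_{\widetilde D}$ being $\mathrm{Stab}_G(D)$-equivariant and birational forces the generic stabilizers and the $G$-equivariant birational classes of the orbits of $D$ and of $\widetilde D$ to agree, and that the identifications of $\Ex_G(\phi)$ and $\Ex_G(\phi^{-1})$ with the stated classes of exceptional orbits on $W$ are genuine bijections, with no divisorial component lost or counted twice in passing through $W$. Once this is in place, the cancellation in $\delta_G(p)-\delta_G(q)$ and the three-term telescoping are purely formal.
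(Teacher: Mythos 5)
Your proposal is correct and follows essentially the same route as the paper: pass to a smooth projective $G$-model $W$ dominating $X$, $Y$, $Z$, prove the dictionary identifying $\Ex_G(\phi)$ and $\Ex_G(\phi^{-1})$ with the $q$-but-not-$p$ (resp.\ $p$-but-not-$q$) exceptional orbits (the paper's Lemma \ref{lem.step1}), deduce $C_G(\phi)=\delta_G(p)-\delta_G(q)$ (your $\delta_G$ is just $-C_G$ of a birational morphism, so this is the paper's identity \eqref{eqn.phisigmatau}), and telescope. The only, inessential, difference is that you resolve the graph of $(\mathrm{id},\phi,\psi\circ\phi)$ at once, while the paper builds the triple model by resolving successive graphs.
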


Passing to the orbifold context, let 
$$
\phi\colon \mathcal X\dashrightarrow \mathcal Y
$$
be a birational isomorphism of projective orbifolds over $k$. 
Let $\mathcal{E}x(\phi)$ be 
the set of divisorial components of the exceptional locus. 
In parallel, we have

\begin{theo}
\label{thm:main2}
The assignment
\begin{equation*}
\label{eqn:cgg}
\mathcal C(\phi) := \sum_{\substack{\mathcal E\in \mathcal Ex(\phi^{-1})\\ \mathrm{gen. stab}(\mathcal E)=\{1\}}}
[\mathcal E] 
-\sum_{\substack{\mathcal D\in \mathcal Ex(\phi)\\ \mathrm{gen. stab}(\mathcal D)=\{1\}}} 
[\mathcal D] \in \bZ[\cBir_{n-1}],
\end{equation*}
where the sums are over divisorial components of exceptional loci with trivial generic stabilizer, 
respects compositions 
$$
\mathcal C(\psi\circ\phi) = \mathcal C (\phi)+\mathcal C(\psi), 
$$
for sequences of birational isomorphisms of orbifolds
$$
\cX\stackrel{\phi}{\dashrightarrow} \cY\stackrel{\psi}{\dashrightarrow} \cZ.
$$
\end{theo}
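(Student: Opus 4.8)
The plan is to follow the template of the proof of \cite[Lemma~2.2]{LSh}, working throughout with orbifolds in place of varieties; the one genuinely new ingredient is resolution of indeterminacy for smooth Deligne--Mumford stacks in characteristic zero. Recall that for a birational isomorphism $\phi\colon\cX\dashrightarrow\cY$ of smooth projective orbifolds the indeterminacy loci of $\phi$ and of $\phi^{-1}$ have codimension $\geq 2$, and $\mathcal{E}x(\phi)$ is precisely the set of prime divisors of $\cX$ that $\phi$ contracts, i.e.\ those whose image has dimension $\leq n-2$.

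Given $\cX\stackrel{\phi}{\dashrightarrow}\cY\stackrel{\psi}{\dashrightarrow}\cZ$, the first step is to pass to a common resolution: a smooth projective orbifold $\cW$ with proper birational morphisms $f\colon\cW\to\cX$, $g\colon\cW\to\cY$, $h\colon\cW\to\cZ$ satisfying $g=\phi\circ f$ and $h=\psi\circ g$ as rational maps, obtained by resolving the indeterminacies of $\phi$ and $\psi$ simultaneously. One may take $\cW$ so that $f$, $g$, $h$ are compositions of blow-ups of smooth substacks of codimension $\geq 2$; consequently each of $f,g,h$ is an isomorphism away from a closed substack of codimension $\geq 2$ in its target, and it carries the generic stabilizer of any prime divisor not contracted by it to that of the corresponding divisor downstairs. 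The second step is the strict-transform dictionary: for $p\in\{f,g,h\}$, say $p\colon\cW\to\cX$, the non-$p$-exceptional prime divisors of $\cW$ are exactly the strict transforms $\widetilde{\mathcal D}$ of the prime divisors $\mathcal D$ of $\cX$, the assignment $\mathcal D\mapsto\widetilde{\mathcal D}$ is a bijection onto them, and $\widetilde{\mathcal D}\dashrightarrow\mathcal D$ is a birational isomorphism of orbifolds, hence $\mathrm{gen.stab}(\widetilde{\mathcal D})=\mathrm{gen.stab}(\mathcal D)$ and $[\widetilde{\mathcal D}]=[\mathcal D]$ in $\bZ[\cBir_{n-1}]$. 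Applying this to $f$ and to $g$ and using $\phi=g\circ f^{-1}$ identifies $\mathcal{E}x(\phi^{-1})$ with the set of prime divisors of $\cW$ that are $f$-exceptional but not $g$-exceptional, and $\mathcal{E}x(\phi)$ with those that are $g$-exceptional but not $f$-exceptional, compatibly with generic stabilizers and with classes; restricting to divisors with trivial generic stabilizer gives
\[
\cC(\phi)=\sum_{\substack{\cF\subset\cW\ \text{prime div.}\\ f\text{-exc.},\ \text{not }g\text{-exc.}\\ \mathrm{gen.stab}(\cF)=\{1\}}}[\cF]\ -\ \sum_{\substack{\cF\subset\cW\ \text{prime div.}\\ g\text{-exc.},\ \text{not }f\text{-exc.}\\ \mathrm{gen.stab}(\cF)=\{1\}}}[\cF],
\]
and, analogously, the same formula for $\cC(\psi)$ with $(f,g)$ replaced by $(g,h)$ and for $\cC(\psi\circ\phi)$ with $(f,g)$ replaced by $(f,h)$.

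With all three invariants written as sums over prime divisors of the single orbifold $\cW$, additivity is pure bookkeeping. A prime divisor of $\cW$ with nontrivial generic stabilizer contributes nothing to any of the three. For a prime divisor $\cF$ with trivial generic stabilizer, let $a,b,d\in\{0,1\}$ be the indicators that $\cF$ is $f$-exceptional, $g$-exceptional, $h$-exceptional respectively; the coefficient of $[\cF]$ is $a(1-b)-b(1-a)=a-b$ in $\cC(\phi)$, $b-d$ in $\cC(\psi)$, and $a-d$ in $\cC(\psi\circ\phi)$. Since $(a-b)+(b-d)=a-d$ and $\cW$ has only finitely many prime divisors contributing, we conclude $\cC(\psi\circ\phi)=\cC(\phi)+\cC(\psi)$.

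I expect the main obstacle to lie in the first two steps rather than in the final cancellation: one needs resolution of indeterminacy in the orbifold category, yielding $\cW$ again a smooth projective orbifold built from codimension-$\geq 2$ blow-ups, and one must verify the strict-transform dictionary at the level of stacks -- in particular that along the generic point of a non-contracted prime divisor the relevant morphism is an isomorphism \emph{of stacks}, so that a possibly nontrivial generic stabilizer in codimension one is transported without change. This is also precisely why the restriction to divisors with trivial generic stabilizer causes no difficulty for composition: the dictionary respects that restriction on both sides. With these inputs in hand, the remainder is the elementary computation above, exactly as in \cite{LSh}.
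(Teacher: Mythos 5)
Your proof is correct and follows essentially the same route as the paper: a common resolution $\cW$ dominating $\cX,\cY,\cZ$ (diagram \eqref{eqn.threeorbi}), the identification of $\mathcal{E}x(\phi)$, $\mathcal{E}x(\phi^{-1})$ with exceptional divisors of the morphisms from $\cW$ (Lemma \ref{lem.orbi1}), and a final cancellation. The paper phrases the last step as $\mathcal C(\phi)=-\mathcal C(\sigma)+\mathcal C(\tau)$ and telescopes, which is just a compact form of your indicator bookkeeping, so the two arguments coincide in substance.
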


\medskip
\noindent
\textbf{Acknowledgments:}
The first author was partially supported by the
Swiss National Science Foundation. 
The second author was partially supported by NSF grant 2000099.

\section{Grothendieck ring and (stable) birational geometry}

Let $k$ be an algebraically closed field of characteristic zero. Let 
$$
\Var_n=\Var_{n,k}
$$
be the set of isomorphism classes of algebraic varieties over $k$ of dimension $n$, and put
$$
\Var = \bigsqcup_{n\ge 0} \Var_{n}.
$$
The {\em Grothendieck ring}
$$
\mathrm K_0(\Var)
$$
is the quotient of the $\bZ$-module 
$$
\bZ[\Var],
$$
the free abelian group spanned by isomorphism classes of algebraic varieties over $k$, 
by the  {\em Excision} and {\em Product} relations;
it carries a natural filtration, by dimension. We will write $[X]$ for the class of a variety in $\mathrm K_0(\Var)$.

The paper \cite{larsenlunts} revealed a connection with birational geometry: smooth projective varieties $X$ and $Y$ are {\em stably birational} if and only if 
$
[X] = [Y]
$
modulo $(\mathbb L)$, where $\mathbb L$ is the class of the affine line, 
so that 
$$
\mathrm K_0(\Var) / (\mathbb L) \cong \bZ[\mathrm{SBir}],
$$
the free abelian group spanned by {\em stable birationality classes} of algebraic varieties over $k$. 

The paper \cite{KT} introduced 
$$
\Burn_{n}=\bZ[\Bir_{n}],
$$
the free abelian group spanned by {\em birationality} classes of  $n$-dimensional algebraic varieties over $k$. 
One has natural isomorphisms
$$
\mathrm K_0(\Var^{\le n})/\mathrm K_0(\Var^{\le n-1}) \stackrel{\sim}{\lra} \Burn_n, 
$$
from {\em truncated}, by dimension, $\mathrm K_0$-groups. The group 
$$
\Burn:=\bigoplus_{n\ge 0} \Burn_{n}
$$
has a natural ring structure, induced by the product operation on algebraic varieties.  
There is also a surjection
$$
\Burn \to \mathrm{gr}(\mathrm K_0(\Var)),
$$
with nontrivial kernel  \cite[Theorem 2.13]{borisov}.


\section{Generalites: equivariant geometry}
\label{sec.generalities}
We continue to work over an algebraically closed field $k$ of characteristic zero.
Varieties over $k$ are irreducible and reduced.
For varieties with an action of a finite group $G$ we follow the conventions of \cite[\S 2]{KT-vector}: there might be several irreducible components, but the $G$-action is assumed to be transitive on irreducible components.
Usually the $G$-action is generically free, meaning that the locus with trivial stabilizer is nonempty, and hence dense.
We write
\[ X \sim_G Y \]
to mean that two varieties $X$ and $Y$ with generically free $G$-actions are equivariantly birationally equivalent, i.e., there exists a $G$-equivariant birational isomorphism 
$$
\phi\colon X\dashrightarrow Y.
$$
The set of such $\phi$ will be denoted by 
$$
\Bir_G(X,Y).
$$
We put 
$$
\Bir\Aut_G(X) :=\Bir_G(X,X),
$$
and note that when $X$ is a rational variety of dimension $n$,
\begin{equation}
\label{eqn.AutG}
\Bir\Aut_G(X) \subseteq \Cr_n
\end{equation}
is the {\em centralizer} of $G$ in the Cremona group.

A generically free $G$-action on a smooth projective variety $X$ is called {\em linearizable}, respectively {\em stably linearizable}, if there exists a faithful $G$-representation $V$ such that 
$$
X\sim_G \bP(V), \quad \text{resp.} \quad X\times \bP^m \sim_G\bP(V)\quad \text{for some $m$}.
$$
(The $G$-action on $X\times \bP^m$
is the product of the given action on $G$ and the trivial action on $\bP^m$.)
The determination of equivariant birationality classes, and in particular, of linearizable actions, is a current research direction.

The equivariant Burnside group $\Burn_n(G)$ from \cite{BnG} was defined so that a smooth projective variety $X$ of dimension $n$ with generically free $G$-action determines a class
\[ [X\actsfromright G]\in \Burn_n(G), \]
in such a manner that
\[ X\sim_G X'\quad\Rightarrow\quad
[X\actsfromright G]=[X'\actsfromright G]\quad\text{in $\Burn_n(G)$}. \]
The abelian group $\Burn_n(G)$ is defined by generators and relations.
Generators are \emph{symbols}
\[ (H, Y\actsfromleft K, \beta) \]
consisting of an abelian subgroup $H\subseteq G$, a function field with faithful action of a subgroup $Y\subseteq Z_G(H)/H$, where $Z_G(H)$ denotes the centralizer of $H$ in $G$, and a finite sequence of nontrivial characters of $H$, generating $H^\vee$;
the transcendence degree of $K/k$ and the length of $\beta$ must sum to $n$.
Two symbols are declared equivalent, if they differ by conjugation by an element of $G$, or reordering of the characters in $\beta$.
Two further relations are imposed:
\begin{itemize}
\item
\emph{vanishing} of symbols with $\beta=(a,b,\dots)$, $a+b=0$;
\item
\emph{blow-up}, equating a symbol $(H,Y\actsfromleft K,\beta)$, $\beta=(a,b,\dots)$, with $\Theta_1+\Theta_2$, where
\begin{gather*}
\Theta_1=(H,Y\actsfromleft K,\beta_1)+(H,Y\actsfromleft K,\beta_2), \\
\beta_1=(a,b-a,\dots),\quad \beta_2=(b,a-b,\dots),
\end{gather*}
when $a$ and $b$ are distinct characters, $\Theta_1=0$ when $a=b$, and
\[
\Theta_2=(\overline{H},\overline{Y}\actsfromleft \overline{K},\overline\beta)
\]
when $b-a$ has nontrivial kernel $\overline{H}$ and the characters in $\beta$ remain nontrivial in $\overline{H}^\vee$, otherwise $\Theta_2=0$.
Here, $\overline{K}$ is $K(t)$ with a particular action lifting the action on $K$ of the group $\overline{Y}\subseteq Z_G(H)/\overline{H}$, containing $H/\overline{H}$, with $\overline{Y}/(H/\overline{H})=Y$.
The images in $\overline{H}^\vee$ of the characters $b$, $\dots$ of $\beta$ form $\bar\beta$.
\end{itemize}

We have omitted several details, including a nontrivial condition on $Y\actsfromleft K$, for $(H,Y\actsfromleft K,\beta)$ to be admitted as a symbol.
This condition, called \emph{Assumption 1} in \cite{BnG}, ensures that characters of $H$ can be lifted to certain $1$-cocycles, used in the \emph{action construction} of \cite{BnG}, which gives the action on $\overline{K}$ in the term $\Theta_2$ of the blow-up relation.

Given a smooth projective variety $X$ with generically free $G$-action, we first perform a particular sequence of blow-ups in $G$-invariant centers to obtain an action in \emph{standard form}: there is open invariant $U\subset X$, on which $G$-acts freely, such that $X\setminus U$ is a simple normal crossing divisor such that the $G$-orbit of every component is nonsingular.
An action in standard form has abelian stabilizers.
Making a choice of orbit representatives $F$ of the finite $G$-set
\[ \text{\{components of $X^H$}\,|\,\text{$H\subseteq G$ abelian}\}, \]
we define
\[
[X\actsfromright G]:=\sum_{\text{orbit rep.\ $F$}}
(H,Y\actsfromleft k(F),\beta),
\]
where $H$ is the generic stabilizer of $F$, the quotient by $H$ of the subgroup of $G$, mapping $F$ to itself, is taken as $Y$ with induced action on the function field $k(F)$, and $\beta$ records the representation type of $H$ on the normal bundle to $F$ in $X$ at a general point of $F$.

\begin{theo}[\cite{BnG}]
\label{thm.BnG}
The class
\[ [X\actsfromright G]\in \Burn_n(G) \]
is an invariant of the $G$-equivariant birational type of an $n$-dimensional smooth projective variety $X$ with generically free $G$-action.
\end{theo}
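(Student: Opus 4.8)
The plan is to reduce, via $G$-equivariant weak factorization, to the invariance of $[X\actsfromright G]$ under a single $G$-equivariant blow-up, and then to recognize the change in the defining symbol sum as an instance of the blow-up relation of $\Burn_n(G)$.

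Let $\phi\colon X\dashrightarrow X'$ be a $G$-equivariant birational isomorphism of smooth projective $n$-dimensional varieties with generically free $G$-action. The $G$-equivariant version of the weak factorization theorem writes $\phi$ as a composite of $G$-equivariant blow-ups along smooth $G$-invariant centers and of inverses of such, through smooth projective $G$-varieties with generically free action; by functorial equivariant resolution one refines the chain so that each intermediate variety is in standard form and each elementary step is \emph{compatible}, i.e.\ is the blow-up of a smooth stratum of the boundary divisor (together with the invariance proven below, this also yields the independence of $[X\actsfromright G]$ from the chosen standard-form model, two such models being joined by a compatible chain). It therefore suffices to prove: if $\beta\colon\widetilde X\to X$ is a compatible $G$-equivariant blow-up of a smooth $G$-invariant center $Z\subseteq X$, with $X$ and $\widetilde X$ both in standard form, then $[\widetilde X\actsfromright G]=[X\actsfromright G]$.

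I would compare $[X\actsfromright G]=\sum_F(H_F,Y_F\actsfromleft k(F),\beta_F)$ with its analogue for $\widetilde X$ stratum by stratum, according to the position of the fixed-locus component $F$ relative to $Z$. For orbit representatives $F$ whose general point lies off $Z$, the strict transform $\widetilde F\subseteq\widetilde X$ contributes an identical symbol: the field $k(F)$ with its residual $Y_F$-action, the generic stabilizer $H_F$, and the normal character sequence $\beta_F$ are all computed at a general point of $F$, which is unaffected because $\codim_X Z\ge2$. Hence the entire discrepancy is concentrated in the strata generically contained in $Z$, together with the new strata supported on the exceptional divisor $E=\bP(N_{Z/X})$, and the point is to show that their net contribution to $[\widetilde X\actsfromright G]-[X\actsfromright G]$ vanishes in $\Burn_n(G)$.

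Establishing this vanishing is the core of the argument and, I expect, the main obstacle. Write $Z$ generically as a component of $X^{H}$, contributing the symbol $(H,Y\actsfromleft k(Z),\beta)$ with $\beta=(a_1,\dots,a_r)$ the characters of $H$ on $N_{Z/X}$ at a general point. Over a general point of $Z$ the fixed locus of $H$ in $\bP(N_{Z/X})$ is a disjoint union of linear subspaces indexed by the distinct characters among the $a_i$; for each of them one computes the generic stabilizer (the joint kernel of the relevant character differences, in general larger than $H$) and the normal representation in $\widetilde X$, whose characters are the differences $a_j-a_i$ together with the character $a_i$ normal to $E$. Matching these data against the blow-up relation $(H,Y\actsfromleft K,\beta)=\Theta_1+\Theta_2$, $\beta=(a,b,\dots)$, and iterating it to peel off the characters of $\beta$ one pair at a time, one reproduces precisely the iterated blow-up of $E$ along its successively deeper fixed strata needed to reach standard form: the two summands of $\Theta_1$ correspond to the two linear-subspace components, carrying the character sequences $(a,b-a,\dots)$ and $(b,a-b,\dots)$; the term $\Theta_2$ corresponds to the stratum on which the stabilizer jumps to $\overline H=\ker(b-a)$, with $\overline K=K(t)$ supplied by the $\bP$-bundle structure through the action construction of \cite{BnG}; and the built-in provisos — $\Theta_1=0$ when $a=b$, $\Theta_2=0$ when $b-a$ is injective, and the vanishing of a symbol a pair of whose characters sums to zero — match exactly the degenerate configurations of fixed loci. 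What remains is bookkeeping: checking that the recursion terminates at the standard-form strata of $\widetilde X$ and that every intermediate symbol it produces is admissible (Assumption 1 of \cite{BnG}). Granting this, the required invariance, and with it the theorem, follows.
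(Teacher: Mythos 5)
The paper itself does not prove this statement: Theorem \ref{thm.BnG} is quoted from \cite{BnG}, so there is no internal proof to compare against. Your outline does follow the strategy of the cited proof --- $G$-equivariant weak factorization, reduction to actions in standard form, and verification of invariance under a single compatible blow-up, with the change in the symbol sum absorbed by the blow-up relation, which was designed exactly for this purpose.

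As a proof, however, your text has a genuine gap, and you flag it yourself: essentially all of the content sits in the step you ``grant''. Concretely: (i) arranging that every intermediate variety in the factorization is in standard form and that every elementary step is a blow-up compatible with the boundary is not a routine refinement --- it requires the SNC-compatible strengthening of equivariant weak factorization together with the standard-form results, and it is also what underlies the well-definedness of $[X\actsfromright G]$ itself, which you only address parenthetically; (ii) the stratumwise cancellation is not a formal consequence of the displayed relation. The blow-up relation modifies a single pair of characters, whereas blowing up a compatible center $Z$ lying generically in a component of $X^{H}$ of codimension $r\ge 3$ creates on $E=\bP(N_{Z/X})$ several fixed components with enlarged stabilizers, residual actions on function fields that must be identified via the action construction, and symbols whose admissibility (Assumption 1 of \cite{BnG}) has to be checked; reconciling the iterated two-character relation with this geometry, including the degenerate cases $a=b$, $a+b=0$, and $\ker(b-a)$ trivial, is the actual computation carried out in \cite{BnG}, not bookkeeping. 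So your proposal is a correct road map of the known argument, but it does not yet constitute a proof.
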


Since the vanishing and blow-up relations involve only symbols with \emph{nontrivial} abelian subgroups of $G$, we obtain a direct sum decomposition
$$
\Burn_n(G)=\bZ[\Bir_{G,n}] \oplus \Burn_n^{\mathrm{nontriv}}(G).
$$
The first factor consists of symbols $(1,Y\actsfromleft K,())$ up to conjugation; these encode the $G$-equivariant birational type of a smooth projective variety of dimension $n$ with generically free $G$-action.
The second factor is generated by symbols with nontrivial $H$. 
In applications we consider the projection of $[X\actsfromright G]$ to the second factor and exploit features which potentially distinguish $G$-equivariant birational types.

\section{Equivariant classes of divisors}
\label{sec.divisors}
Here we prove Theorem \ref{thm:main}.
As recalled in Section \ref{sec.generalities}, a smooth projective variety $X$ with generically free $G$-action determines a class
\[ [X\actsfromright G]\in \Burn_n(G). \]
This definition may be extended to possibly singular projective varieties with generically free $G$-action, by first applying equivariant resolution of singularities.
Theorem \ref{thm.BnG} extends to possibly singular projective varieties with generically free $G$-action.

A $G$-equivariant birational map
\[ \phi\colon X\dashrightarrow Y \]
of smooth projective varieties with generically free $G$-action fits into a diagram of $G$-equivariant birational morphisms of smooth projective varieties
\begin{equation}
\begin{split}
\label{eqn.biratfactor}
\xymatrix@C=12pt{
& Z\ar[dl]_{\sigma}\ar[dr]^{\tau} \\
X \ar@{-->}[rr]_{\phi} && Y
}
\end{split}
\end{equation}
(We may apply equivariant resolution of singularities to the closure in $X\times Y$ of the graph of $\phi$ to obtain such a diagram.)

\begin{lemm}
\label{lem.step1}
In the situation of diagram \eqref{eqn.biratfactor} and under the convention that we identify an orbit of divisors on $X$ or on $Y$ with the orbit of proper transforms in $Z$ we have:
\begin{align*}
\mathrm{Ex}(\sigma^{-1})&=\emptyset, \\
\mathrm{Ex}(\tau^{-1})&=\emptyset, \\
\mathrm{Ex}(\phi)&= \mathrm{Ex}(\tau)\setminus\mathrm{Ex}(\sigma), \\
\mathrm{Ex}(\phi^{-1})&= \mathrm{Ex}(\sigma)\setminus\mathrm{Ex}(\tau). \\
\end{align*}
\end{lemm}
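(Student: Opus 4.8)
The plan is to reduce the statement to two standard facts about a proper birational morphism $f\colon Z\to W$ of smooth projective varieties, applied to $f=\sigma$ and $f=\tau$. First, since $W$ is normal, Zariski's main theorem gives that the image $f(\Ex(f))$ of the exceptional locus has codimension $\ge 2$ in $W$; in particular the locus of indeterminacy of $f^{-1}$ has codimension $\ge 2$, so $f^{-1}$ contracts no prime divisor of $W$, i.e.\ $\Ex(f^{-1})=\emptyset$. Applied to $\sigma$ and to $\tau$, this gives the first two lines. Second, taking proper transforms sets up a bijection between the prime divisors of $W$ and the prime divisors of $Z$ that do \emph{not} lie in $\Ex(f)$: the proper transform of a prime divisor of $W$ is a prime divisor of $Z$ dominating it, hence not $f$-exceptional, and conversely a prime divisor of $Z$ not in $\Ex(f)$ is the proper transform of its (codimension-one) image in $W$ --- this last point again uses normality of $W$. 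This bijection is what gives precise meaning to the stated convention of identifying an orbit of divisors on $X$, or on $Y$, with the orbit of its proper transform in $Z$.

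With these facts in hand, I would treat the last two identities as follows. Write $\phi=\tau\circ\sigma^{-1}$. Under the identification above, a prime divisor $D$ on $X$ corresponds to a prime divisor $\widetilde D$ on $Z$ with $\widetilde D\notin\Ex(\sigma)$, and $\sigma$ restricts to an isomorphism near the generic point of $\widetilde D$; hence the closure of the image $\phi(D)$ coincides with the closure of $\tau(\widetilde D)$, and $D\in\Ex(\phi)$ if and only if $\dim\overline{\tau(\widetilde D)}<n-1$, i.e.\ if and only if $\widetilde D\in\Ex(\tau)$. Thus, identifying divisors on $X$ with their transforms in $Z$, the set $\Ex(\phi)$ is exactly the set of prime divisors of $Z$ lying in $\Ex(\tau)$ but not in $\Ex(\sigma)$, namely $\Ex(\tau)\setminus\Ex(\sigma)$. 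Interchanging the roles of $(X,\sigma)$ and $(Y,\tau)$ and replacing $\phi$ by $\phi^{-1}$ yields $\Ex(\phi^{-1})=\Ex(\sigma)\setminus\Ex(\tau)$. Since $\sigma$, $\tau$ and $\phi$ are all $G$-equivariant, proper transform commutes with the $G$-action and every statement above passes verbatim to $G$-orbits of prime divisors.

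The only point that really deserves care --- and which I would isolate as a preliminary observation --- is the bijection between prime divisors on a smooth projective variety $W$ and the non-$f$-exceptional prime divisors on $Z$ for a birational morphism $f\colon Z\to W$: this is where normality of $W$ enters, ruling out that a prime divisor be contracted onto a prime divisor, and it is exactly what makes the identification convention in the lemma well defined. The rest is elementary bookkeeping with generic points, dimensions and the $G$-action, and I do not anticipate a real obstacle there.
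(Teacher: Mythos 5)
Your proof is correct and follows essentially the same route as the paper: the first two equalities come from the standard fact that a birational morphism to a normal (here smooth) variety has no divisors in the exceptional locus of its inverse, and the last two follow by matching divisors on $X$ (resp.\ $Y$) with their proper transforms in $Z$ and checking contraction under $\tau$ (resp.\ $\sigma$). Your write-up simply makes explicit the appeal to Zariski's main theorem and the proper-transform bijection that the paper leaves implicit.
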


\begin{proof}
Since the exceptional locus of the inverse of a birational morphism has no divisorial components, the first two equalities are clear.
For any divisor in the exceptional locus of $\phi$, the proper transform in $Z$ is in the exceptional locus of $\tau$ but not of $\sigma$.
A divisor in the exceptional locus of $\tau$, not in the exceptional locus of $\sigma$ has image a divisor in $X$, in the exceptional locus of $\sigma$.
This establishes the third equality, and the fourth equality is established similarly.
\end{proof}

\begin{proof}[Proof of Theorem \ref{thm:main}]
Using Lemma \ref{lem.step1} and
the $G$-equivariant birational invariance of the class of a $G$-invariant divisor in $\Burn_{n-1}(G)$,
we obtain
\begin{equation}
\label{eqn.phisigmatau}
C_G(\phi)=-C_G(\sigma)+C_G(\tau).
\end{equation}

By repeating the construction leading to \eqref{eqn.biratfactor} (for $\phi$, for $\psi$, and for the equivariant birational map from the variety dominating $X$ and $Y$ to the variety dominating $Y$ and $Z$) we obtain an analogous diagram
\[
\xymatrix{
& W\ar[dl]_{\sigma}\ar[d]^{\tau}\ar[dr]^{\omega} \\
X \ar@{-->}[r]_{\phi} & Y \ar@{-->}[r]_{\psi} & Z
}
\]
By applying \eqref{eqn.phisigmatau} to $\phi$, $\psi$, and $\psi\circ\phi$, we obtain the theorem.
\end{proof}

\begin{rema}
In the definition of $C_G(\phi)$, we consider only divisorial components of exceptional divisors with {\em trivial} generic stabilizer. 
One reason for this is clear: the definition of $[X\actsfromright G]$ in \cite{BnG} requires a generically free $G$-action.
A formalism was developed in \cite{KT-struct} to encode actions with nontrivial generic stabilizer,
and one could imagine a formulation of $C_G(\phi)$ that would include divisorial orbits of components of exceptional loci with nontrivial generic stabilizer.
However, these contributions would give a quantity that does not depend on $\phi$; in fact, the quantity would be a difference of two quantities, one depending only on the $G$-equivariant isomorphism class of $X$, the other only on the $G$-equivariant isomorphism class of $Y$.
Indeed, there are only finitely many divisors with nontrivial generic stabilizer on $X$, and on $Y$.
In the situation of diagram \eqref{eqn.biratfactor} there are only finitely many divisors with nontrivial generic stabilizers on $Z$, containing (birationally modified copies of) the ones of $X$, and the ones of $Y$; application of \eqref{eqn.phisigmatau} leads to the observation.
\end{rema}

\section{Generalities: Orbifolds} 
\label{sec.orbi}
Another setting for the discussion of birational geometry is that of \emph{orbifolds}.
The main example of interest will be the quotient stack $[X/G]$ associated to a smooth projective variety $X$ with a generically free action of a finite group $G$.
Abstractly, an (algebraic) orbifold over $k$ is a smooth separated irreducible Deligne-Mumford stack of finite type over $k$, with trivial generic stabilizer.
For example, the orbifold $[X/G]$ is the category (a stack is a category with functor to a suitable base category, which for us will be $k$-schemes)
where an object over a $k$-scheme $T$ consists of a $G$-torsor $E\to T$ with a $G$-equivariant morphism $E\to X$
(where the functor to the base category sends such an object to the $k$-scheme $T$).
A morphism over $g\colon T\to T'$, from $E\to T$, with $E\to X$, to $E'\to T'$, with $E'\to X$, is a $G$-equivariant morphism $E\to E'$, that fits into a commutative diagram with $g$ and fits into a commutative diagram with the morphisms to $X$.

To every orbifold (or, more generally, Deligne-Mumford stack of finite type over $k$ with finite inertia, the last a technical condition implied by separatedness) there is an associated coarse moduli space; the coarse moduli space of $[X/G]$ is the quotient variety $X/G$.
When the coarse moduli space is a projective variety, we call the orbifold \emph{projective}; the notion of quasi-projective orbifold is defined analogously.
For instance, the orbifolds $[X/G]$ mentioned above are projective.

In \cite{Bbar} we studied invariants of projective orbifolds up to birational equivalence.
However, the notion of birational equivalence is subtle and encodes more information than the birational equivalence class of the coarse moduli space.

\begin{defi}[\cite{Bbar}]
\label{def.biratorbifolds}
Let $\mathcal{X}$ and $\mathcal{Y}$ be projective orbifolds over $k$.
We say that  $\mathcal{X}$ and $\mathcal{Y}$ are \emph{birationally equivalent} and employ the notation
\[ \mathcal{X}\sim\mathcal{Y}, \]
if there exists a commutative diagram of projective orbifolds
\begin{equation}
\begin{split}
\label{eqn.biratorbi}
\xymatrix@C=12pt{
& \mathcal{Z}\ar[dl]_{\sigma}\ar[dr]^{\tau} \\
\mathcal{X} \ar@{-->}[rr]_{\phi} && \mathcal{Y}
}
\end{split}
\end{equation}
where the morphisms from $\mathcal{Z}$ are projective and birational.
\end{defi}

In the setting of a diagram \eqref{eqn.biratorbi} we call $\phi\colon \mathcal{X}\dashrightarrow \mathcal{Y}$ a \emph{birational isomorphism},  write
\[ \Bir(\mathcal{X},\mathcal{Y}) \]
for the set of such, and put
\[ \Bir\Aut(\mathcal{X}):=\Bir(\mathcal{X},\mathcal{X}). \]

\begin{rema}
\label{rem.biratorbi}
Without insisting on a diagram \eqref{eqn.biratorbi}, to give
$\mathcal{X}\dashrightarrow\mathcal{Y}$ is the same as giving a birational isomorphism of the coarse moduli spaces.
The requirement of a diagram \eqref{eqn.biratorbi} makes the notion of birational isomorphism of orbifolds more subtle.
In contrast with the characterization \eqref{eqn.AutG} of $G$-equivariant birational automorphisms of rational varieties, there is no evident characterization of $\Bir\Aut(\mathcal{X})$ as a subgroup of the $k$-automorphisms of the field of rational functions $k(\mathcal{X})$.
\end{rema}

A composition of birational isomorphisms of projective orbifolds is a birational isomorphism. Indeed, given birational isomorphisms
\[
\mathcal{X}\stackrel{\phi}\dashrightarrow\mathcal{Y}\qquad\text{and}\qquad
\mathcal{Y}\stackrel{\psi}\dashrightarrow\mathcal{Z}
\]
we consider accompanying diagrams, in the manner of \eqref{eqn.biratorbi}, and with a construction of closure of graph and resolution of singularities obtain a commutative diagram of projective orbifolds
\begin{equation}
\begin{split}
\label{eqn.threeorbi}
\xymatrix{
& \mathcal{W}\ar[dl]_{\sigma}\ar[d]^{\tau}\ar[dr]^{\omega} \\
\mathcal{X} \ar@{-->}[r]_{\phi} & \mathcal{Y} \ar@{-->}[r]_{\psi} & \mathcal{Z}
}
\end{split}
\end{equation}
where the morphisms from $\mathcal{W}$ are projective and birational.

A birational isomorphism between orbifolds of dimension one (orbifold curves) is necessarily an isomorphism.
Many orbifold curves have the same coarse moduli space (e.g., quotients $[\bP^1/C_n]$ by cyclic groups of varying order $n$) but represent different isomorphism classes and hence different birational equivalence classes of orbifolds.

Here is an example of a nontrivial birational automorphism of an orbifold surface:

\begin{exam}
\label{exam:dp6stack}
 Consider 
$\bP^2$, with a linear action of a cyclic group of prime order $G:=C_p$, given by
$$
(x:y:z)\mapsto (x:\zeta^a_p y: \zeta^b_pz), \quad a,b \neq 0 \pmod p, \quad a\neq b \pmod p,
$$
and $\mathcal{X}:=[\bP^2/G]$.
We record the weights at the three fixed points:
$$
(a,b), \quad 
(-a,b-a), \quad 
(-b,a-b)
$$
We blow up the fixed points, to obtain $Y$, a del Pezzo surface of degree 6, and blow down the proper transforms of the coordinate lines; these are $G$-equivariant morphisms. Computing the weights in the resulting projective plane, we obtain
$$
(-a,-b), \quad 
(a,a-b), \quad 
(b,b-a). \quad 
$$
The automorphism of $G$ given by $c\mapsto -c$ reproduces the original weights and indeed the original quotient stack.
We obtain a diagram
\[
\xymatrix@C=12pt{
& [Y/G]\ar[dl]\ar[dr] \\
\mathcal{X}\ar@{-->}[rr]_{\phi} && \mathcal{X},
}
\]
with $\phi\in \Bir\Aut(\mathcal{X})$, not an automorphism of $\mathcal{X}$.
\end{exam}

The orbifold Burnside group $\overline{\Burn}_n$ was introduced in \cite{Bbar}.
An $n$-dimensional projective orbifold $\mathcal{X}$ determines a class
\[ [\mathcal{X}]\in\overline{\Burn}_n \]
with
\[ \mathcal{X}\sim \mathcal{Y}\quad\Rightarrow\quad
[\mathcal{X}]=[\mathcal{Y}]\quad\text{in $\overline{\Burn}_n$}. \]

\section{Classes of orbifold divisors}
\label{sec.orbifolddivisors}
Here we prove Theorem \ref{thm:main2}.
We start by noting that the definition
of the class of an $n$-dimensional projective orbifold
\[ [\mathcal{X}]\in\bZ[\cBir_{n-1,k}] \]
may be extended,
using resolution of singularities of Deligne-Mumford stacks (of finite type over $k$), to the class of possibly singular Deligne-Mumford stacks
$\mathcal{X}$, which are irreducible, reduced, separated, with projective coarse moduli space (i.e., irreducible, reduced projective Deligne-Mumford stacks over $k$).

\begin{lemm}
\label{lem.orbi1}
In the situation of diagram \eqref{eqn.biratorbi} and under the convention that we identify divisors on $\mathcal{X}$ or on $\mathcal{Y}$ with their proper transforms in $\mathcal{Z}$ we have:
\begin{align*}
\mathcal{E}x(\sigma^{-1})&=\emptyset, \\
\mathcal{E}x(\tau^{-1})&=\emptyset, \\
\mathcal{E}x(\phi)&=\mathcal{E}x(\tau)\setminus\mathcal{E}x(\sigma),\\
\mathcal{E}x(\phi^{-1})&=\mathcal{E}x(\sigma)\setminus\mathcal{E}x(\tau).
\end{align*}
\end{lemm}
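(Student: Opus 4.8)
The plan is to mirror, essentially verbatim, the proof of Lemma \ref{lem.step1}, transposing each step from the $G$-equivariant setting to the orbifold setting. The key point is that the ingredients used in the equivariant argument are all purely birational and divisorial in nature, and each has a direct analogue for projective Deligne–Mumford stacks once one has resolution of singularities (to produce the diagram \eqref{eqn.biratorbi}) and the notion of proper transform of a divisor on an orbifold. So first I would record the two facts that make the argument go: (i) the exceptional locus of the inverse of a projective birational morphism of orbifolds has no divisorial components — this follows because such a morphism is an isomorphism in codimension zero on the target and, being birational and projective, its exceptional locus on the source has codimension $\ge 1$ while its inverse is defined away from a closed substack of codimension $\ge 2$ in the target, exactly as for varieties; and (ii) under the identification of divisors on $\mathcal X$ (resp.\ $\mathcal Y$) with their proper transforms on $\mathcal Z$, a divisor is $\sigma$-exceptional if and only if it is not the proper transform of a divisor on $\mathcal X$.

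With these in hand, the first two equalities $\mathcal Ex(\sigma^{-1})=\emptyset$ and $\mathcal Ex(\tau^{-1})=\emptyset$ are immediate from (i). For the third equality, I would argue as follows. If $\mathcal D$ is a divisorial component of the exceptional locus of $\phi$, then its proper transform in $\mathcal Z$ is contracted by $\tau$ (since $\phi=\tau\circ\sigma^{-1}$ contracts $\mathcal D$ and $\sigma$ is an isomorphism near the generic point of the proper transform), so it lies in $\mathcal Ex(\tau)$; and it is not $\sigma$-exceptional, since it \emph{is} the proper transform of a divisor on $\mathcal X$. Conversely, if $\mathcal E\in\mathcal Ex(\tau)\setminus\mathcal Ex(\sigma)$, then $\mathcal E$ is the proper transform of a divisor $\mathcal D$ on $\mathcal X$ (as it is not $\sigma$-exceptional, using (i) to rule out it being $\sigma^{-1}$-exceptional), and since $\tau$ contracts $\mathcal E$, the composite $\phi=\tau\circ\sigma^{-1}$ contracts $\mathcal D$, so $\mathcal D\in\mathcal Ex(\phi)$. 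The fourth equality is the same argument with the roles of $\sigma$ and $\tau$ exchanged (equivalently, apply the third equality to $\phi^{-1}$, whose graph is resolved by the same $\mathcal Z$ with $\sigma$ and $\tau$ swapped).

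The one genuinely orbifold-specific subtlety — and the step I expect to require the most care — is making precise the notion of \emph{proper transform of a divisor} along a projective birational morphism $\mathcal Z\to\mathcal X$ of orbifolds, and checking that the elementary statements about it used above (a divisor pulls back to its proper transform plus exceptional divisors; a morphism that is an isomorphism over an open dense substack identifies divisors meeting that open with their proper transforms) hold in the stacky setting. Since both $\mathcal Z$ and $\mathcal X$ are smooth separated Deligne–Mumford stacks with trivial generic stabilizer, the birational morphism is representable in codimension $\le 1$ after shrinking, and Weil divisors on such stacks behave exactly as on their coarse spaces away from the locus of nontrivial stabilizer-jumps; so the verification is routine but should be flagged. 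Everything else is a formal consequence of the diagram \eqref{eqn.biratorbi} and carries over without change from Lemma \ref{lem.step1}.
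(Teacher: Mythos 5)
Your proposal is correct and follows exactly the paper's route: the paper proves this lemma simply by declaring it analogous to Lemma \ref{lem.step1}, and your argument is precisely that analogy spelled out (inverse of a birational morphism has no divisorial exceptional components, plus the proper-transform identification), with a reasonable flag of the stack-theoretic points that make the transposition legitimate. No gaps to report.
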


\begin{proof}
The proof is analogous to that of Lemma \ref{lem.step1}.
\end{proof}

\begin{proof}[Proof of Theorem \ref{thm:main2}]
By Lemma \ref{lem.orbi1}, we have
\[
\mathcal{C}(\phi)=-\mathcal{C}(\sigma)+\mathcal{C}(\tau).
\]
This equality, applied to the maps in diagram \eqref{eqn.threeorbi}
completes the proof.
\end{proof}

\section{Example}
\label{sect:exam}
Here we rework an example from \cite{LSh} and show that the invariant $C_G$ can take a nonzero value.

\begin{exam}
\label{exa.LSh}
We adapt the construction of \cite[Theorem 3.7]{LSh} to obtain, for a linear action of $G:=C_5$ on $\bP^3$, a $G$-equivariant birational automorphism $\phi$ with \[C_G(\phi)\ne 0. \]
Let $V$ be the regular representation of $G$ and
\[ X:=G(2,V)\subset {\textstyle \bP(\bigwedge^2V)} \]
the Grassmannian of $2$-dimensional subspaces of $V$.
For a general linear form $\ell\in \bigwedge^2V^\vee$ we have:
\begin{itemize}
\item The $G$-orbit of $\ell$ cuts out in $X$ a nonsingular curve $C$, of genus $1$ with $j$-invariant different from $0$ and $1728$.
\item The (unique up to scale) nontrivial $G$-invariant quadratic form on $\bigwedge^2V$ vanishing on $G(2,V)$ defines, together with the $G$-orbit of $\ell$, a nonsingular quadric threefold $Q$.
\end{itemize}
As explained in \cite[Proposition 3.6]{LSh},
the additional quadratic forms on $\bigwedge^2V$ vanishing on $G(2,V)$ define a rational map
\[ Q\dashrightarrow \bP^3, \]
given by blowing up $C$ in $Q$ and contracting a divisor to the Jacobian
\[ J^2(C)\subset \bP^3 \]
of degree $2$ line bundles on $C$.
Another such rational map is given by projection from a fixed point on $Q$, i.e., blowing up the fixed point and contracting a divisor to a conic.
These are equivariant birational isomorphisms, which we combine to obtain $\phi$,
and we use that $C$ and $J^2(C)$ are not equivariantly isomorphic to obtain
the nonvanishing of $C_G(\phi)$.
\end{exam}

\bibliographystyle{plain}
\bibliography{orbibir}

\end{document}